\def\rightharpoonfill@{\arrowfill@\relbar\relbar\rightharpoonup}
\DeclareRobustCommand{\overrightharpoon}{\mathpalette{\underarrow@\rightharpoonfill@}}
\begin{document}
\newcommand{\beq}{\begin{equation}}
\newcommand{\eb}{\begin{equation}}
\newcommand{\eneq}{\end{equation}}
\newcommand{\ee}{\end{equation}}
\newtheorem{thm}{Theorem}[section]
\newtheorem{coro}[thm]{Corollary}
\newtheorem{lem}[thm]{Lemma}
\newtheorem{prop}[thm]{Proposition}
\newtheorem{defi}[thm]{Definition}
\newtheorem{rem}[thm]{Remark}
\newtheorem{cl}[thm]{Claim}
\title{Analytic solutions for the approximated Kantorovich mass
transfer problems by $p$-Laplacian approach}
\author{Yanhua Wu$^{1}$\ \ \ \ Xiaojun Lu$^2$}
\thanks{Corresponding author: Xiaojun Lu, Department of Mathematics \& Jiangsu Key Laboratory of
Engineering Mechanics, Southeast University, 210096, Nanjing, China}
\thanks{Keywords: Kantorovich mass transfer, $p$-Laplacian problem, canonical duality theory}
\thanks{Mathematics Subject Classification: 35J20, 35J60, 49K20,
80A20}
\date{}
\maketitle
\pagestyle{fancy}                   % ÉèÖÃҳü
\lhead{Y. Wu and X. Lu} \rhead{Kantorovich mass transfer problem} %\rhead{\small\leftmark}
\begin{center}
1. Department of Sociology, School of Public Administration, Hohai
University, 211189, Nanjing, China\\
2. Department of Mathematics \& Jiangsu Key Laboratory of
Engineering Mechanics, Southeast University, 210096, Nanjing,
China\\
%2. Jiangsu Testing Center for Quality of Construction Engineering
%Co., Ltd, 210028, Nanjing, China
%2. Faculty of Science and Technology, Federation University
%Australia, Ballarat, VIC 3350, Australia
\end{center}
\vspace{.5cm}

\begin{abstract}
This manuscript discusses the approximation of a global maximizer of
the Kantorovich mass transfer problem through the approach of
$p$-Laplacian equation. Using an approximation mechanism, the primal
maximization problem can be transformed into a sequence of
minimization problems. By applying the canonical duality theory, one
is able to derive a sequence of analytic solutions for the
minimization problems. In the final analysis, the convergence of the
sequence to a global maximizer of the primal Kantorovich problem
will be demonstrated.
\end{abstract}
\section{Introduction}
Complementary variational method has been applied in the study of
finite deformation by Hellinger since the beginning of the 20th
century. During the last few years, considerable effort has been
taken to find minimizers for non-convex strain energy functionals
with a double-well potential. In this respect, Ericksen bar is a
typical model for the research of elastic phase transitions. In
\cite{G6}, R. W. Ogden et al. treated the Ericksen bar as a 1-D
smooth compact manifold and discussed two classical loading devices,
namely, hard device and soft device, by introducing a distributed
axial body force. By applying the canonical duality method, the
authors characterized the local energy extrema and the global energy
minimizer for both hard device and soft device. This method proved
to be very efficient in solving lots of open problems in the
mechanical fields such as non-convex optimal design and control,
nonlinear stability analysis of finite deformation, nonlinear
elastic theory with residual strain, existence results for Nash
equilibrium points of non-cooperative
games etc. Interested readers can refer to \cite{G1,G6,LU} for more details.\\

This paper mainly addresses the Kantorovich problem in higher
dimensions. Let $\Omega=\mathbb{B}(O_1,R_1)$ and
$\Omega^*=\mathbb{B}(O_2,R_2)$ denote two open balls with centers
$O_1$ and $O_2$, radii $R_1$ and $R_2$ in the Euclidean space
$\mathbb{R}^n$, respectively, and we denote $U:=\Omega\cup\Omega^*.$
Here we focus on the following two representative cases:
\begin{itemize}\item $\Omega=\mathbb{B}(O_1,R_1)$,
$\Omega^*=\mathbb{B}(O_2,R_2)$, $\Omega\cap\Omega^*=\emptyset$;
\item $\Omega=\mathbb{B}(O_1,R_1)$,
$\Omega^*=\mathbb{B}(O_1,R_2)$, $R_1\neq R_2$.
\end{itemize}
Let $f^+$ and $f^-$ be two nonnegative density functions in $\Omega$
and $\Omega^*$, respectively, and satisfy the normalized balance
condition
\[
\int_\Omega f^+dx=\int_{\Omega^*}f^-dx=1.
\]
For convenience's sake, let $f:=f^+-f^-$. First, let the admissible
set $\mathscr{A}$ be defined as
\[
\mathscr{A}:=\Big\{\phi\in W_0^{1,\infty}(U)\cap
C(\overline{U})\Big|\ \|\nabla\phi\|_{L^\infty}\leq 1, \phi\
\text{radially symmetric},\ \phi=0\ \text{on}\
\overline{\Omega\cap\Omega^*}\Big\},
\]
where $W_0^{1,\infty}(U)$ is a Sobolev spaces. The aim is to find an
analytic global maximizer (so-called {\it Kantorovich potential})
$u\in\mathscr{A}$ for the Kantorovich problem in the following form,
\beq(\mathcal{P}):
K[u]=\displaystyle\max_{w\in\mathscr{A}}\Big\{K[w]:=\int_{U}wfdx\Big\}.\eneq

In this paper, we consider the Kantorovich problem through a
$p$-Laplacian approach by introducing an approximation of the primal
($\mathcal{P}$) \cite{Evans2},
\begin{equation}(\mathcal{P}^{(p)}):
\displaystyle\min_{w_p\in\mathscr{A}}\Big\{I^{(p)}[w_p]:=\int_U
\Big(H^{(p)}(\nabla w_p)-w_pf\Big)dx\Big\},
\end{equation}
where $p>2$ and $H^{(p)}:\mathbb{R}^n\to\mathbb{R}^+$ is defined as
$$H^{(p)}(\gamma):=|\gamma|^p/p,$$
and $I^{(p)}$ is called the {\it potential energy functional}. It's
evident that $$
-\displaystyle\lim_{p\to+\infty}\displaystyle\min_{w_p\in\mathscr{A}}\{I^{(p)}[w_p]\}=\displaystyle\max_{w\in\mathscr{A}}\{K[w]\}.
$$ Consequently, once a function $\bar{u}_p$ satisfying
$I^{(p)}[\bar{u}_p]=\displaystyle\min_{w_p\in\mathscr{A}}\{I^{(p)}[w_p]\}$
is obtained, then it will help find out an analytic Kantorovich
potential
$u=\displaystyle\lim_{p\to+\infty}\bar{u}_p$ in the $L^\infty$ sense, which maximizes the primal problem ($\mathcal{P}$).\\

By variational calculus, one derives a corresponding Euler-Lagrange
equation for $(\mathcal{P}^{(p)})$, \beq
\begin{array}{ll}\displaystyle {\rm div}(|\nabla u_p|^{p-2}\nabla u_p)+f=0,& \ \text{\rm in}\
U\setminus\{\overline{\Omega\cap\Omega^*}\},
\end{array}\eneq
equipped with the Dirichlet boundary condition. For the integer
case, $p=1$, by variational calculus, one obtains the mean curvature
operator; $p=2$, one has the Laplace operator(see \cite{Evans4}).
For $p=n$, one derives the $n$-harmonic equation which is invariant
under M\"{o}bius transformation. While for the fractional case, such
as $p=3/2$, $p-$Laplacian describes the flow through porous media.
And glaciologist usually study the case $p\in(1,4/3]$. For more
background materials, please refer to \cite{LIONS}.

Clearly, (3) is a nonlinear $p$-Laplacian problem which is difficult
to solve by the direct approach \cite{JH,Evans4,LIONS}. However, by
the canonical duality theory, one is able to demonstrate the
existence and uniqueness of the solution for the nonlinear
differential equation, which establishes the equivalence between the
global minimizer of ($\mathcal{P}^{(p)}$)
and the solution of Euler-Lagrange equation (3).\\

At the moment, we would like to introduce the main theorems.
\begin{thm}
For any positive density functions $f^+\in C(\overline{\Omega})$ and
$f^-\in C(\overline{\Omega^*})$ satisfying the normalized balance
condition, there exists a unique solution $\bar{u}_p\in\mathscr{A}$
for the Euler-Lagrange equation (3), which is at the same time a
global minimizer for the approximation problem
($\mathcal{P}^{(p)}$). In particular, let
\[
E_p(x):=\displaystyle x^{(2p-2)/(p-2)}, x\in[0,1],
\]
and $E_p^{-1}$ stands for the inverse of $E_p$, then one has
\begin{itemize}
\item $\Omega=\mathbb{B}(O_1,R_1)$,
$\Omega^*=\mathbb{B}(O_2,R_2)$, $\Omega\cap\Omega^*=\emptyset$.
$\bar{u}_p$ can be represented explicitly as
\[
\bar{u}_p(r)= \left\{
\begin{array}{lll}
\displaystyle\int^{r}_{R_1}F(\rho)\rho/E_p^{-1}(F^2(\rho)\rho^2)d\rho,&
r\in[0,R_1],\\
\\
\displaystyle\int^{r}_{R_2}G(\rho)\rho/E_p^{-1}(G^2(\rho)\rho^2)d\rho,&
r\in[0,R_2],
\end{array}
\right.
\]
where $F$ and $G$ are defined as
\[
\left\{
\begin{array}{lll}
F(r):=-\displaystyle\Gamma(n/2)/(2\pi^{n/2}
r^n)+\int_r^{R_1}f^+(\rho)\rho^{n-1}/r^n d\rho,&
r\in[0,R_1],\\
\\
G(r):=\displaystyle\Gamma(n/2)/(2\pi^{n/2}
r^n)-\int_r^{R_2}f^-(\rho)\rho^{n-1}/r^n d\rho,&
r\in[0,R_2].\\
\end{array}
\right.
\]
\\
\item $\Omega=\mathbb{B}(O_1,R_1)$,
$\Omega^*=\mathbb{B}(O_1,R_2)$, $R_1>R_2>0$. $\bar{u}_p$ can be
represented explicitly as
\[
\bar{u}_p(r)=
\int^{r}_{R_2}F_p(\rho)\rho/E_p^{-1}(F_p^2(\rho)\rho^2)d\rho,\
r\in[R_2,R_1],
\]
where
\[
F_p(r):=C_pR_2^n/{r^n}-\int^r_{R_2}f^+(\rho)\rho^{n-1}/r^n d\rho,
\]
and $C_p\in(0,\Gamma(n/2)/(2\pi^{n/2}R_2^n))$.
\\
\item $\Omega=\mathbb{B}(O_1,R_1)$,
$\Omega^*=\mathbb{B}(O_1,R_2)$, $R_2>R_1>0$. $\bar{u}_p$ can be
represented explicitly as
\[
\bar{u}_p(r)=
\int^{r}_{R_1}G_p(\rho)\rho/E_p^{-1}(G_p^2(\rho)\rho^2)d\rho,\
r\in[R_1,R_2],
\]
where
\[
G_p(r):=-D_pR_1^n/{r^n}+\int^r_{R_1}f^-(\rho)\rho^{n-1}/r^n d\rho,
\]
and $D_p\in(0,\Gamma(n/2)/(2\pi^{n/2}R_1^n))$.
\end{itemize}
\end{thm}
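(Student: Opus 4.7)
The plan is to combine two ingredients: first, use the canonical duality framework to establish existence, uniqueness, and the equivalence between the global minimizer of $(\mathcal{P}^{(p)})$ and the weak solution of the Euler--Lagrange equation (3); second, exploit the radial symmetry built into $\mathscr{A}$ to reduce (3) to a one-dimensional ODE that integrates by quadratures.

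For existence and uniqueness, I would first note that, since $p>2$, the density $H^{(p)}(\xi)=|\xi|^p/p$ is strictly convex and coercive, so $I^{(p)}$ is strictly convex on the convex admissible set $\mathscr{A}\subset W_0^{1,\infty}(U)$; this automatically gives at most one minimizer and, via the direct method, at least one. To produce the formulas constructively I would deploy the canonical transformation of Gao--Strang type: introduce the geometric variable $\xi=\nabla w$, the dual variable $\eta=|\xi|^{p-2}\xi$ with Legendre conjugate $(H^{(p)})^*(\eta)=(p-1)|\eta|^{p/(p-1)}/p$, and write the total complementary functional. Stationarity in the primal variable recovers (3) with the Dirichlet boundary condition, while stationarity in the dual variable reproduces the constitutive law $\eta=|\nabla\bar{u}_p|^{p-2}\nabla\bar{u}_p$. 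The canonical min--max duality then identifies this unique critical point with the global minimizer.

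For the explicit formulas, radial symmetry turns the divergence into $r^{-(n-1)}(r^{n-1}|\bar{u}_p'|^{p-2}\bar{u}_p')'$, so integrating (3) once gives, on each radial sector,
\[
r^{n-1}|\bar{u}_p'(r)|^{p-2}\bar{u}_p'(r)=A-\int_{r_0}^{r}s^{n-1}f(s)\,ds,
\]
with base point $r_0$ and constant $A$ depending on the geometry. In the disjoint case, regularity of $\bar{u}_p$ at the center of each ball forces $A=0$; combining the total-mass normalization $\int_\Omega f^+\,dx=1$ with the sphere-surface constant $2\pi^{n/2}/\Gamma(n/2)$ yields precisely the functions $F$ and $G$. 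Solving $|\bar{u}_p'|^{p-2}\bar{u}_p'=Fr$ for $\bar{u}_p'$ is the same as applying $E_p^{-1}$ in the denominator: a short check using $1-(p-2)/(p-1)=1/(p-1)$ shows $Fr/E_p^{-1}(F^2r^2)=\mathrm{sign}(Fr)|Fr|^{1/(p-1)}$, which is exactly the $p$-power inversion. Integrating from the outer radius where $\bar{u}_p$ vanishes (Dirichlet condition) then gives the claimed closed form, and symmetrically for $G$.

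The main obstacle lies in the concentric cases, where the absence of an interior regularity constraint replaces $A=0$ with a free parameter $C_p$ (respectively $D_p$), and one must determine it from the remaining Dirichlet condition, namely
\[
\int_{R_2}^{R_1}F_p(\rho)\rho\big/E_p^{-1}\!\big(F_p^2(\rho)\rho^2\big)\,d\rho=0.
\]
I would resolve this by a monotonicity plus intermediate-value argument: since $\partial F_p/\partial C_p=R_2^n/r^n>0$ and the inversion $x\mapsto\mathrm{sign}(x)|x|^{1/(p-1)}$ is strictly monotone, the integral depends strictly monotonically on $C_p$. At $C_p=0$ the integrand is strictly negative on $(R_2,R_1)$, while at the upper endpoint $C_p=\Gamma(n/2)/(2\pi^{n/2}R_2^n)$ the total-mass identity for $f^+$ forces $F_p>0$ throughout and the integrand becomes strictly positive. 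The intermediate value theorem then produces the unique admissible $C_p$ in the stated interval, and the case $R_2>R_1$ follows by the symmetric treatment with $D_p$ and $G_p$.
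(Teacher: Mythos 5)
Your proposal is correct and follows the same overall architecture as the paper: a canonical/complementary duality step to tie the minimizer of $(\mathcal{P}^{(p)})$ to the Euler--Lagrange equation, a reduction to a radial first-order ODE for the flux, the identity $F\rho/E_p^{-1}(F^2\rho^2)=\mathrm{sign}(F\rho)|F\rho|^{1/(p-1)}$ to invert the constitutive law, vanishing of the flux constant at the center in the disjoint case, and a monotonicity argument to pin down $C_p$ (resp.\ $D_p$) in the concentric case. The differences are worth recording. First, you obtain global minimality and uniqueness from strict convexity of $H^{(p)}$ plus the direct method on the convex set $\mathscr{A}$; the paper instead argues via the sign of the second variations $\delta^2_\phi I^{(p)}>0$ and $\delta^2_\psi I_d^{(p)}<0$ combined with uniqueness of $\overrightarrow{\theta_p}$, which on its face only certifies local extremality --- your convexity route is the cleaner and more airtight justification of the word ``global.'' Second, you use the standard Legendre conjugate of $|\xi|^p/p$ in the gradient variable, whereas the paper's Gao--Strang transformation takes the quadratic measure $\xi_p=|\nabla w_p|^2$ as the geometric variable; the two produce the same dual algebraic equation, so nothing is lost. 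Third, in the concentric case the paper only asserts that $M_p(t)$ is strictly increasing and sets $C_p=M_p^{-1}(0)$, while you additionally evaluate the signs at the endpoints $C_p=0$ and $C_p=\Gamma(n/2)/(2\pi^{n/2}R_2^n)$ to locate the root in the stated open interval by the intermediate value theorem --- this actually supplies a justification the paper omits. The one caveat common to both your argument and the paper's is the passage between the \emph{constrained} minimization over $\mathscr{A}$ (where $\|\nabla\phi\|_{L^\infty}\le 1$) and the \emph{unconstrained} Euler--Lagrange equation (3): a minimizer over a constrained set need not be a critical point unless one checks the constraint is inactive (or handles the active set), and neither treatment addresses this explicitly; if you want your write-up to improve on the paper, that is the place to add a sentence.
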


\begin{thm}
For any positive density functions $f^+\in C(\overline{\Omega})$ and
$f^-\in C(\overline{\Omega^*})$ satisfying the normalized balance
condition, there exists a global maximizer for the Kantorovich
problem ($\mathcal{P}$).
\end{thm}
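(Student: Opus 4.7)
\smallskip
\noindent\textbf{Proof plan.}
The plan is to use the explicit minimizers produced in Theorem 1.1 as an approximating sequence indexed by $p$, extract a uniform limit via Arzel\`a--Ascoli, and verify that this limit realizes $\max_{w\in\mathscr{A}}K[w]$. By Theorem 1.1, for every $p>2$ there exists $\bar{u}_p\in\mathscr{A}$ with $I^{(p)}[\bar{u}_p]=\min_{w\in\mathscr{A}}I^{(p)}[w]$. The admissibility $\bar{u}_p\in\mathscr{A}$ forces $\|\nabla\bar{u}_p\|_{L^\infty(U)}\le 1$, so the family $\{\bar{u}_p\}_{p>2}$ is equi-Lipschitz with constant $1$, and because $\bar{u}_p$ vanishes on $\partial U$ and $U$ is bounded, it is also uniformly bounded on $\overline{U}$. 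Arzel\`a--Ascoli then yields a subsequence (still denoted $\{\bar{u}_p\}$) converging uniformly on $\overline{U}$ to some $u\in C(\overline{U})$.

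The next step is to verify that $u\in\mathscr{A}$. Uniform convergence carries over the vanishing on $\partial U$, the vanishing on $\overline{\Omega\cap\Omega^*}$, and radial symmetry, and the $1$-Lipschitz property passes to the limit, so $u\in W_0^{1,\infty}(U)\cap C(\overline{U})$ with $\|\nabla u\|_{L^\infty}\le 1$. With $u\in\mathscr{A}$ in hand, I would exploit the minimality of $\bar{u}_p$ against any competitor $w\in\mathscr{A}$: for each $p>2$,
\begin{equation*}
\int_U\frac{|\nabla\bar{u}_p|^p}{p}\,dx-\int_U\bar{u}_p f\,dx\;\le\;\int_U\frac{|\nabla w|^p}{p}\,dx-\int_U wf\,dx.
\end{equation*}
Both $L^p$-gradient terms are bounded by $|U|/p$ thanks to the $\mathscr{A}$-constraint, so they vanish as $p\to\infty$; meanwhile uniform convergence $\bar{u}_p\to u$ combined with $f\in L^\infty(U)$ on the bounded set $U$ gives $\int_U\bar{u}_p f\,dx\to\int_U uf\,dx$. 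Passing to the limit produces $K[u]\ge K[w]$ for every $w\in\mathscr{A}$, which is exactly the desired maximizer property.

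The main obstacle is not analytic depth but the careful verification that the uniform limit $u$ inherits every ingredient of the admissible class $\mathscr{A}$, most crucially the hard constraint $\|\nabla u\|_{L^\infty}\le 1$ and the pointwise vanishing on $\overline{\Omega\cap\Omega^*}$. This is precisely where the strong (uniform) compactness supplied by Arzel\`a--Ascoli is indispensable: a merely weak-$*$ limit would preserve the gradient bound but could blur the pointwise conditions. Everything beyond this point---the decay of the gradient penalty $|U|/p\to 0$ and the interchange of limit and integral in $\int\bar{u}_p f\,dx$---is routine, and no supplementary existence theory for $(\mathcal{P}^{(p)})$ is needed because Theorem 1.1 already delivers the minimizers $\bar{u}_p$ explicitly.
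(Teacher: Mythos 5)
Your proposal is correct, and its compactness skeleton coincides with the paper's: the paper likewise takes the family of minimizers $\bar{u}_p$, uses the uniform bounds $\sup_p|\bar{u}_p|\le C(R_1,R_2)$ and $\sup_p|\nabla\bar{u}_p|\le 1$ to extract a subsequence with $\bar{u}_p\to u$ in $L^\infty(U)$ and $\nabla\bar{u}_p\,\overrightharpoon{*}\,\nabla u$ weakly-$*$ in $L^\infty(U)$, and then obtains $\|\nabla u\|_{L^\infty}\le\liminf_p\|\nabla\bar{u}_p\|_{L^\infty}\le 1$ by weak-$*$ lower semicontinuity (whereas you get the same bound by passing the $1$-Lipschitz property through the uniform limit; the two routes are interchangeable here). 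The genuine difference is that you carry the argument to completion where the paper stops short: the paper states ``it remains to check that $u$ satisfies'' the maximality property but then only verifies the gradient bound, never showing $K[u]\ge K[w]$ for all $w\in\mathscr{A}$. Your step of passing to the limit in the minimality inequality
\[
\int_U\frac{|\nabla\bar{u}_p|^p}{p}\,dx-\int_U\bar{u}_p f\,dx\;\le\;\int_U\frac{|\nabla w|^p}{p}\,dx-\int_U wf\,dx,
\]
observing that both gradient terms are dominated by $|U|/p\to 0$ because of the constraint $\|\nabla\cdot\|_{L^\infty}\le 1$, and concluding $\int_U uf\,dx\ge\int_U wf\,dx$, is exactly the missing bridge from ``the $\bar{u}_p$ subconverge'' to ``the limit is a Kantorovich potential.'' Your explicit verification that $u$ inherits the remaining admissibility conditions (vanishing on $\partial U$ and on $\overline{\Omega\cap\Omega^*}$, radial symmetry) is also absent from the paper but is needed for $u\in\mathscr{A}$, and uniform convergence does deliver it. In short: same approach, but your write-up is the complete version of the paper's sketch.
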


The rest of the paper is organized as follows. In Section 2, first
we introduce some useful notations which will simplify our proof
considerably. Then we apply the canonical dual transformation to
deduce a perfect dual problem ($\mathcal{P}^{(p)}_d$) corresponding
to $(\mathcal{P}^{(p)})$ and a pure complementary energy principle.
Next we apply the canonical duality theory to prove Theorem 1.1 and
Theorem 1.2.
\section{Proof of the main results}
\subsection{Some useful notations}

\begin{itemize}
\item $\overrightarrow{\theta_p}$ is given by
\[
\overrightarrow{\theta_p}(x)=(\theta_{p,1}(x),\cdots,\theta_{p,n}(x))=|\nabla
w_p|^{p-2}\nabla w_p.
\]
\item $\Phi^{(p)}:\mathscr{A}\to L^\infty(U)$ is a nonlinear
geometric mapping defined as
\[
\Phi^{(p)}(w_p):=|\nabla w_p|^2.
\]
For convenience's sake, denote $\xi_p:=\Phi^{(p)}(w_p).$ It is
evident that $\xi_p$ belongs to the function space $\mathscr{U}$
given by
\[
\mathscr{U}:= \Big\{\phi\in L^\infty(U)\Big| 0\leq\phi\leq1\Big\}.
\]
\item $\Psi^{(p)}:\mathscr{U}\to L^\infty(U)$ is a canonical energy
defined as
\[
\Psi^{(p)}(\xi_p):=\xi_p^{p/2}/p,
\]
which is a convex function with respect to $\xi_p$. For simplicity,
denote $\zeta_p:=\xi_p^{(p-2)/2}/2$, which is the G\^{a}teaux
derivative of $\Psi^{(p)}$ with respect to $\xi_p$. Moreover,
$\zeta_p$ is invertible with respect to $\xi_p$ and belongs to the
function space $\mathscr{W}$,
\[\mathscr{W}:=\Big\{\phi\in
L^\infty(U)\Big| 0\leq\phi\leq 1/2\Big\}.
\]
\item
$\Psi^{(p)}_\ast:\mathscr{W}\to L^\infty(U)$ is defined as
\[
\Psi^{(p)}_\ast(\zeta_p):=\xi_p\zeta_p-\Psi^{(p)}(\xi_p)=(1-2/p)2^{2/(p-2)}\zeta_p^{p/(p-2)}.
\]
\item $\lambda_p$ is defined as $\lambda_p:=2\zeta_p,$ and belongs
to the function space $\mathscr{V}$,
\[\mathscr{V}:=\Big\{\phi\in
L^\infty(U)\Big| 0\leq\phi\leq 1\Big\}.
\]
\end{itemize}
\subsection{Canonical duality techniques}
\begin{defi}
By Legendre transformation, one defines a Gao-Strang total
complementary energy functional $\Xi^{(p)}$,
\[
\Xi^{(p)}(u_p,\zeta_p):=\displaystyle\int_{U}\Big\{\Phi^{(p)}(u_p)\zeta_p-\Psi^{(p)}_\ast(\zeta_p)
-fu_p\Big\}dx.
\]
\end{defi}
Next we introduce an important {\it criticality criterium} for the
Gao-Strang total complementary energy functional.
\begin{defi}
$(\bar{u}_p, \bar{\zeta}_p)\in\mathscr{A}\times\mathscr{W}$ is
called a critical pair of $\Xi^{(p)}$ if and only if \beq
D_{u_p}\Xi^{(p)}(\bar{u}_p,\bar{\zeta}_p)=0, \eneq \beq
D_{\zeta_p}\Xi^{(p)}(\bar{u}_p,\bar{\zeta}_p)=0, \eneq where
$D_{u_p}, D_{\zeta_p}$ denote the partial G\^ateaux derivatives of
$\Xi^{(p)}$, respectively. \end{defi} Indeed, by variational
calculus, we have the following observation from (4) and (5).
\begin{lem}
On the one hand, for any fixed $\zeta_p\in\mathscr{W}$, $(3.4)$ is
equivalent to the equilibrium equation
\[
\begin{array}{ll}\displaystyle {\rm div}(\lambda_p \nabla\bar{u}_{p})+f=0,& \ \text{\rm in}\
U\setminus\{\overline{\Omega\cap\Omega^*}\}.\end{array}
\]
On the other hand, for any fixed $u_p\in\mathscr{A}$, (5) is
consistent with the constructive law
\[
\Phi^{(p)}(u_p)=D_{\zeta_p}\Psi^{(p)}_\ast(\bar{\zeta}_p).
\]
\end{lem}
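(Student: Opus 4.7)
My plan is to establish both assertions of Lemma 2.3 by straightforward variational calculus: compute each partial G\^ateaux derivative of $\Xi^{(p)}$, integrate by parts where appropriate, and then invoke the fundamental lemma of the calculus of variations.

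For the first assertion, I fix $\bar{\zeta}_p \in \mathscr{W}$ and perturb $\bar{u}_p$ along an admissible test direction $\varphi \in W_0^{1,\infty}(U) \cap C(\overline{U})$ with $\varphi \equiv 0$ on $\overline{\Omega\cap\Omega^*}$. Since only the terms $\Phi^{(p)}(u_p)\bar{\zeta}_p = |\nabla u_p|^2\bar{\zeta}_p$ and $-fu_p$ depend on $u_p$, the directional derivative reads
\[
D_{u_p}\Xi^{(p)}(\bar{u}_p,\bar{\zeta}_p)[\varphi] = \int_U \bigl(2\bar{\zeta}_p\,\nabla\bar{u}_p\cdot\nabla\varphi - f\varphi\bigr)\,dx.
\]
Writing $\lambda_p := 2\bar{\zeta}_p$ and integrating by parts on $U\setminus\overline{\Omega\cap\Omega^*}$, the boundary contributions vanish because $\varphi$ is zero on both $\partial U$ and on $\overline{\Omega\cap\Omega^*}$, and one arrives at
\[
-\int_{U\setminus\overline{\Omega\cap\Omega^*}} \bigl({\rm div}(\lambda_p\nabla\bar{u}_p)+f\bigr)\varphi\,dx = 0.
\]
As $\varphi$ ranges over the admissible test functions, the fundamental lemma yields the equilibrium equation almost everywhere; the converse implication follows by reversing the integration by parts.

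For the second assertion, I fix $u_p \in \mathscr{A}$ and perturb $\bar{\zeta}_p$ by a bounded measurable $\eta$, chosen so that $\bar{\zeta}_p + s\eta \in \mathscr{W}$ for small $s$. Differentiating under the integral sign produces
\[
D_{\zeta_p}\Xi^{(p)}(u_p,\bar{\zeta}_p)[\eta] = \int_U \bigl(\Phi^{(p)}(u_p) - D_{\zeta_p}\Psi^{(p)}_\ast(\bar{\zeta}_p)\bigr)\eta\,dx,
\]
so that vanishing for every admissible $\eta$ forces the pointwise identity $\Phi^{(p)}(u_p) = D_{\zeta_p}\Psi^{(p)}_\ast(\bar{\zeta}_p)$, which is the constructive law.

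The main obstacle is to verify that the convex-set constraint $0 \leq \bar{\zeta}_p \leq 1/2$ is inactive in the interior of $U\setminus\overline{\Omega\cap\Omega^*}$, so that two-sided perturbations in $\eta$ are genuinely available and the Euler identity—rather than a one-sided variational inequality—is recovered. This follows from the strict convexity of $\Psi^{(p)}$ on $\mathscr{U}$ together with the explicit form $\bar{\zeta}_p = \xi_p^{(p-2)/2}/2$, which keeps $\bar{\zeta}_p$ strictly below $1/2$ wherever $|\nabla\bar{u}_p|<1$; at the saturation set both sides of the constructive law already coincide. Everything else is bookkeeping.
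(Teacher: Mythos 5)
Your computation is correct and is exactly the "variational calculus" the paper invokes: the paper states this lemma without a written proof, and your Gâteaux-derivative calculations (yielding $\int_U(2\bar{\zeta}_p\nabla\bar{u}_p\cdot\nabla\varphi-f\varphi)\,dx$ and $\int_U(\Phi^{(p)}(u_p)-D_{\zeta_p}\Psi^{(p)}_\ast(\bar{\zeta}_p))\eta\,dx$) followed by integration by parts and the fundamental lemma are precisely the intended argument. Your additional remark about the inactivity of the constraint $0\leq\zeta_p\leq 1/2$ is a sensible point of care that the paper does not address, but it does not change the route.
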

Lemma 3.2.3 indicates that $\bar{u}_p$ from the critical pair
$(\bar{u}_p,\bar{\zeta}_p)$ solves the Euler-Lagrange equation (3).
\begin{defi}
From Definition 3.2.1, one defines the Gao-Strang pure complementary
energy $I^{(p)}_d$ in the form
\[
I^{(p)}_d[\zeta_p]:=\Xi^{(p)}(\bar{u}_p,\zeta_p),
\]
where $\bar{u}_p$ solves the Euler-Lagrange equation (3).
\end{defi}
To simplify the discussion, we use another representation of the
pure energy $I^{(p)}_d$ given by the following lemma.
\begin{lem} The
pure complementary energy functional $I^{(p)}_d$ can be rewritten as
\[
I^{(p)}_d[\zeta_p]=\displaystyle-\int_{U}\Big\{|\overrightarrow{\theta_p}|^2/(4\zeta_p)+(1-2/p)2^{2/(p-2)}\zeta_p^{p/(p-2)}\Big\},
\]
where $\overrightarrow{\theta_p}$ satisfies \beq {\rm
div}\overrightarrow{\theta_{p}}+f=0\ \text{in}\ U, \eneq equipped
with a hidden boundary condition.
\end{lem}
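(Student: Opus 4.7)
The plan is to unfold the definition $I^{(p)}_d[\zeta_p] = \Xi^{(p)}(\bar{u}_p,\zeta_p)$, and then eliminate the linear term $\int_U f\bar{u}_p\,dx$ by exploiting the equilibrium equation satisfied by $\bar{u}_p$. Substituting the explicit formulas $\Phi^{(p)}(\bar{u}_p) = |\nabla \bar{u}_p|^2$ and $\Psi^{(p)}_\ast(\zeta_p)=(1-2/p)2^{2/(p-2)}\zeta_p^{p/(p-2)}$ into Definition 3.2.4 gives
\[
I^{(p)}_d[\zeta_p] = \int_U \Big\{|\nabla\bar{u}_p|^2\zeta_p - (1-2/p)2^{2/(p-2)}\zeta_p^{p/(p-2)} - f\bar{u}_p\Big\}\,dx,
\]
which is the starting point.

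Next I would record the key algebraic identity tying $\overrightarrow{\theta_p}$ to $\zeta_p$. On the critical pair one has $\zeta_p = |\nabla\bar{u}_p|^{p-2}/2$, so $\lambda_p \nabla\bar{u}_p = 2\zeta_p \nabla\bar{u}_p = |\nabla\bar{u}_p|^{p-2}\nabla\bar{u}_p = \overrightarrow{\theta_p}$. Squaring and rearranging yields $|\overrightarrow{\theta_p}|^2/(4\zeta_p) = \zeta_p|\nabla\bar{u}_p|^2$, which rewrites the first integrand in the target form. At the same time, the equilibrium equation from Lemma 3.2.3 transforms into $\mathrm{div}\,\overrightarrow{\theta_p} + f = 0$ on $U\setminus\overline{\Omega\cap\Omega^*}$, which is precisely the constraint appearing in the statement.

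Then I would integrate the cross term by parts. Since $\bar{u}_p\in W_0^{1,\infty}(U)\cap C(\overline{U})$ vanishes on $\partial U$ (and on $\overline{\Omega\cap\Omega^*}$ in the second geometric case), no boundary flux appears, so
\[
\int_U f\bar{u}_p\,dx = -\int_U \mathrm{div}(\lambda_p \nabla\bar{u}_p)\,\bar{u}_p\,dx = \int_U \lambda_p|\nabla\bar{u}_p|^2\,dx = \int_U \frac{|\overrightarrow{\theta_p}|^2}{2\zeta_p}\,dx.
\]
This vanishing of the boundary term is the "hidden boundary condition" alluded to. Substituting back,
\[
I^{(p)}_d[\zeta_p] = \int_U \frac{|\overrightarrow{\theta_p}|^2}{4\zeta_p}\,dx - \int_U \Psi^{(p)}_\ast(\zeta_p)\,dx - \int_U \frac{|\overrightarrow{\theta_p}|^2}{2\zeta_p}\,dx,
\]
which collapses to the announced formula.

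The main obstacle I expect is the justification of the integration by parts on the possibly disconnected domain $U=\Omega\cup\Omega^*$ (first case) or on the annular region with the inner ball removed (second case), where $\bar{u}_p$ is only Lipschitz rather than smooth. However, the combination of the $W_0^{1,\infty}$ membership with the Dirichlet trace on $\partial U$ (and on $\overline{\Omega\cap\Omega^*}$) ensures that the divergence theorem applies componentwise with no boundary contribution, so each piece of the identity is rigorous; the rest is bookkeeping with the constitutive relation $\zeta_p = |\nabla\bar{u}_p|^{p-2}/2$.
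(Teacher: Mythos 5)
Your proposal is correct and takes essentially the same route as the paper: both eliminate $\int_U f\bar{u}_p\,dx$ by integrating by parts against the equilibrium equation ${\rm div}(2\zeta_p\nabla\bar{u}_p)+f=0$ and then use $\overrightarrow{\theta_p}=2\zeta_p\nabla\bar{u}_p$ to rewrite $\zeta_p|\nabla\bar{u}_p|^2$ as $|\overrightarrow{\theta_p}|^2/(4\zeta_p)$. The paper merely packages the computation by adding and subtracting so that the combination $\int_U\big\{{\rm div}(2\zeta_p\nabla\bar{u}_p)+f\big\}\bar{u}_p\,dx$ vanishes as a block, which is the same integration by parts you perform explicitly.
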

\begin{proof}
Through integrating by parts, one has
\[
\begin{array}{lll}
I^{(p)}_d[\zeta_p]&=&\displaystyle-\underbrace{\int_U\Big\{{\rm
div}(2\zeta_p \nabla\bar{u}_{p})+f\Big\}\bar{u}_pdx}_{(I)}\\
\\
&&-\underbrace{\int_U\Big\{\zeta_p|\nabla\bar{u}_{p}|^2+(1-2/p)2^{2/(p-2)}\zeta_p^{p/(p-2)}\Big\}dx.}_{(II)}\\
\\
\end{array}
\]
Since $\bar{u}_p$ solves the Euler-Lagrange equation (3), then the
first part $(I)$ disappears. Keeping in mind the definition of
$\overrightarrow{\theta_p}$ and $\zeta_p$, one reaches the
conclusion.
\end{proof}

With the above discussion, next we establish a variational problem
to the approximation problem ($\mathcal{P}^{(p)}$).
\begin{equation}
(\mathcal{P}_d^{(p)}):\displaystyle\max_{\zeta_p\in\mathscr{W}}\Big\{I^{(p)}_d[\zeta_p]=\displaystyle-\int_{U}\Big\{|\overrightarrow{\theta_p}|^2/(4\zeta_p)+(1-2/p)2^{2/(p-2)}\zeta_p^{p/(p-2)}\Big\}.
\end{equation}
Indeed, by calculating the G\^{a}teaux derivative of $I_d^{(p)}$
with respect to $\zeta_p$, one has \begin{lem} The variation of
$I_d^{(p)}$ with respect to $\zeta_p$ leads to the dual algebraic
equation (DAE), namely, \beq
|\overrightarrow{\theta_p}|^2=(2\bar{\zeta}_p)^{(2p-2)/(p-2)}, \eneq
where $\bar{\zeta}_p$ is from the critical pair
$(\bar{u}_p,\bar{\zeta}_p)$.
\end{lem}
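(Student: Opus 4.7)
The plan is to compute the Gâteaux derivative of $I_d^{(p)}$ with respect to $\zeta_p$ pointwise and set it equal to zero at $\bar{\zeta}_p$. The rewriting supplied by Lemma~3.2.5 displays $I_d^{(p)}[\zeta_p]$ as an integral of a function depending only on $\zeta_p(x)$ and $\overrightarrow{\theta_p}(x)$; moreover the vector field $\overrightarrow{\theta_p}$ is constrained only by $\mathrm{div}\,\overrightarrow{\theta_p}+f=0$, a relation that does not involve $\zeta_p$. Consequently, when varying $\zeta_p$ one may treat $\overrightarrow{\theta_p}$ as held fixed pointwise, reducing the Euler equation to ordinary single-variable calculus under the integral sign.

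First I would differentiate the two summands in the integrand. A direct computation yields
\[
\frac{d}{d\zeta_p}\Big[\frac{|\overrightarrow{\theta_p}|^2}{4\zeta_p}\Big]=-\frac{|\overrightarrow{\theta_p}|^2}{4\zeta_p^2},\qquad \frac{d}{d\zeta_p}\Big[(1-2/p)2^{2/(p-2)}\zeta_p^{p/(p-2)}\Big]=2^{2/(p-2)}\zeta_p^{2/(p-2)},
\]
where the second identity relies on the cancellation $(1-2/p)\cdot p/(p-2)=1$. Incorporating the overall minus sign in the definition of $I_d^{(p)}$ and evaluating at $\bar{\zeta}_p$, the pointwise criticality condition becomes
\[
\frac{|\overrightarrow{\theta_p}|^2}{4\bar{\zeta}_p^2}=2^{2/(p-2)}\bar{\zeta}_p^{2/(p-2)}.
\]

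Next I would clean up the exponents: multiplying both sides by $4\bar{\zeta}_p^2$ and collecting powers of $2$ and $\bar{\zeta}_p$ produces $|\overrightarrow{\theta_p}|^2=2^{2+2/(p-2)}\bar{\zeta}_p^{2+2/(p-2)}$, and the arithmetic identity $2+2/(p-2)=(2p-2)/(p-2)$ then delivers the advertised DAE $|\overrightarrow{\theta_p}|^2=(2\bar{\zeta}_p)^{(2p-2)/(p-2)}$.

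The only conceptual subtlety is the initial claim that $\overrightarrow{\theta_p}$ may be held fixed while $\zeta_p$ varies. A cleaner route that bypasses this point is to invoke the envelope principle on the Gao-Strang total complementary energy $\Xi^{(p)}(u_p,\zeta_p)$: since $D_{u_p}\Xi^{(p)}(\bar{u}_p,\bar{\zeta}_p)=0$ by equation~(4), the implicit $\zeta_p$-dependence of $\bar{u}_p$ contributes nothing to the derivative of $I_d^{(p)}[\zeta_p]=\Xi^{(p)}(\bar{u}_p,\zeta_p)$, so the variation reduces to $D_{\zeta_p}\Xi^{(p)}(\bar{u}_p,\bar{\zeta}_p)=0$. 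This reproduces the constructive law $|\nabla\bar{u}_p|^2=(2\bar{\zeta}_p)^{2/(p-2)}$ of Lemma~3.2.3, and substituting into $\overrightarrow{\theta_p}=2\bar{\zeta}_p\nabla\bar{u}_p$ and squaring recovers the DAE without any appeal to the divergence constraint.
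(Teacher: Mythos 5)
Your proposal is correct and follows essentially the same route as the paper, which offers no written proof beyond the remark that the lemma follows ``by calculating the G\^ateaux derivative of $I_d^{(p)}$ with respect to $\zeta_p$''; your pointwise differentiation, the cancellation $(1-2/p)\cdot p/(p-2)=1$, and the exponent bookkeeping $2+2/(p-2)=(2p-2)/(p-2)$ are exactly that computation carried out in full. The closing cross-check via the criticality condition $D_{\zeta_p}\Xi^{(p)}(\bar{u}_p,\bar{\zeta}_p)=0$ and the identity $\overrightarrow{\theta_p}=2\bar{\zeta}_p\nabla\bar{u}_p$ is a sensible way to justify holding $\overrightarrow{\theta_p}$ fixed during the variation, a point the paper leaves implicit.
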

Taking into account the notation of $\lambda_p$,  the identity (8)
can be rewritten as \beq
|\overrightarrow{\theta_p}|^2=E_p(\lambda_p)={\lambda}_p^{(2p-2)/(p-2)}.
\eneq It is evident $E_p$ is monotonously increasing with respect to
$\lambda\in[0,1]$.
\subsection{Proof of Theorem 1.1}
From the above discussion, one deduces that, once $\theta_p$ is
given, then the analytic solution of the Euler-Lagrange equation (3)
can be represented as \beq
\bar{u}_p(x)=\displaystyle\int^{x}_{x_0}\eta_p(t)dt, \eneq where
$x\in \overline{U}, x_0\in\partial U$, $\eta_p:=\theta_p/\lambda_p$.
Together with (9), one sees that $
\displaystyle\lim_{p\to+\infty}|\nabla\bar{u}_{p}|=1, $ which is
consistent with the a-priori estimate in \cite{Evans1}. Next we
verify that $\bar{u}_p$ is exactly a global minimizer for
($\mathcal{P}^{(p)}$) and $\bar{\zeta}_p$ is a global maximizer for
($\mathcal{P}^{(p)}_d$).
\begin{lem}(Canonical duality theory)
For any positive density functions $f^+\in C(\overline{\Omega})$ and
$f^-\in C(\overline{\Omega^*})$ satisfying the normalized balance
condition, there exists a unique radially symmetric solution
$\bar{u}_p\in\mathscr{A}$ for the Euler-Lagrange equations (3) with
Dirichlet boundary in the form of (10), which is a unique global
minimizer over $\mathscr{A}$ for the approximation problem
($\mathcal{P}^{(p)}$). And the corresponding $\bar{\zeta}_p$ is a
unique global maximizer over $\mathscr{W}$ for the dual problem
($\mathcal{P}_d^{(p)}$). Moreover, the following duality identity
holds, \beq
I^{(p)}(\bar{u}_p)=\displaystyle\min_{u_p\in\mathscr{A}}I^{(p)}(u_p)=\Xi^{(p)}(\bar{u}_p,\bar{\zeta}_p)=\displaystyle\max_{\zeta_p\in\mathscr{W}}I_d^{(p)}(\zeta_p)=I_d^{(p)}(\bar{\zeta}_p).
\eneq
\end{lem}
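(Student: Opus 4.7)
The plan is to combine the saddle-point structure of $\Xi^{(p)}$ identified in Section 2.2 with the ODE reduction from radial symmetry, and to split the argument into three stages: (i) reduce the Euler--Lagrange equation to an ODE and integrate to produce the unique radial candidate $\bar{u}_p\in\mathscr{A}$; (ii) verify the convex--concave character of $\Xi^{(p)}$ on $\mathscr{A}\times\mathscr{W}$ so that any critical pair is automatically a global saddle point; (iii) extract (11) from the Legendre identities relating $\Xi^{(p)}$ to $I^{(p)}$ and to $I^{(p)}_d$.

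For (i), because $f=f^+-f^-$ is radial about each ball's centre and $\mathscr{A}$ restricts to radial $\phi$, equation (3) collapses on each connected component of $U\setminus\overline{\Omega\cap\Omega^*}$ to the scalar ODE $(r^{n-1}|u_p'|^{p-2}u_p')'=-r^{n-1}f(r)$. One integration against $r^{n-1}$ produces the scalar flux $\theta_p$ of Section 2.1, and the Dirichlet data, continuity across the interface, and the normalised balance condition pin down the constants of integration, yielding exactly the functions $F,G,F_p,G_p$ of Theorem 1.1 (with $C_p,D_p$ determined by mass balance). Inverting the strictly monotone map $E_p$ on $[0,1]$ via (9) recovers $|\nabla\bar{u}_p|$ uniquely, and (10) then delivers the explicit $\bar{u}_p$, uniquely in $\mathscr{A}$.

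For (ii), on $\mathscr{A}\times\mathscr{W}$ the Gao--Strang functional reads
\[
\Xi^{(p)}(u_p,\zeta_p)=\int_U\bigl\{|\nabla u_p|^2\zeta_p-(1-2/p)\,2^{2/(p-2)}\zeta_p^{p/(p-2)}-f u_p\bigr\}\,dx,
\]
which is convex in $u_p$ for each fixed $\zeta_p\geq 0$ (the convex integrand $|\nabla u_p|^2$ is weighted by the non-negative $\zeta_p$, corrected by a linear term) and strictly concave in $\zeta_p$ for each fixed $u_p$, because $p/(p-2)>1$ makes $\Psi^{(p)}_\ast$ strictly convex. The equilibrium--constructive law characterisation of critical pairs established earlier then upgrades $(\bar{u}_p,\bar{\zeta}_p)$ to a genuine saddle point of $\Xi^{(p)}$; strict convexity of $I^{(p)}(u_p)=\int(|\nabla u_p|^p/p-fu_p)\,dx$ for $p>2$ secures uniqueness of the primal minimiser, and strict concavity in $\zeta_p$ secures uniqueness of the dual maximiser. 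For (iii), the Legendre pairing $\Psi^{(p)}(\xi_p)=\sup_{\zeta_p\in\mathscr{W}}\{\xi_p\zeta_p-\Psi^{(p)}_\ast(\zeta_p)\}$ gives $\max_{\zeta_p\in\mathscr{W}}\Xi^{(p)}(u_p,\zeta_p)=I^{(p)}(u_p)$ for every $u_p\in\mathscr{A}$, while the alternative representation of $I^{(p)}_d$ obtained by eliminating $u_p$ through the Euler--Lagrange equation gives $\min_{u_p\in\mathscr{A}}\Xi^{(p)}(u_p,\zeta_p)=I^{(p)}_d(\zeta_p)$ for every $\zeta_p\in\mathscr{W}$. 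The saddle identity $\min_{u_p}\max_{\zeta_p}\Xi^{(p)}=\Xi^{(p)}(\bar{u}_p,\bar{\zeta}_p)=\max_{\zeta_p}\min_{u_p}\Xi^{(p)}$ then produces the full chain (11).

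The main obstacle is the global character of stages (ii) and (iii): the equilibrium--constructive law only provides a stationary pair, so the real content is strict convexity--concavity of $\Xi^{(p)}$ over the entire admissible set. The concavity side relies crucially on $p>2$ (so that $p/(p-2)>1$) and on the sign condition $\zeta_p\geq 0$ built into $\mathscr{W}$; without either, $\Psi^{(p)}_\ast$ could lose strict convexity and uniqueness of the dual maximiser would fail. A secondary subtlety is showing that the constants $C_p,D_p$ fixed by the balance condition genuinely lie in the intervals prescribed in Theorem 1.1, which is what keeps $\lambda_p=2\zeta_p$ inside $\mathscr{V}$ and so keeps the whole Legendre framework self-consistent throughout.
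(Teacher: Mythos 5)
Your proposal is correct, and stage (i) coincides with the paper's First and Second Parts: reduce (3) to the scalar ODE for the radial flux, integrate once to get $\overline{F}_p,\overline{G}_p,F_p,G_p$, invert $E_p$ through (9), and integrate again via (10). But your stages (ii)--(iii) take a genuinely different route to global optimality and to the identity (11). The paper never invokes the convex--concave saddle structure of $\Xi^{(p)}$: its Third Part computes the second variations (12) and (13) and concludes from $\delta^2_\phi I^{(p)}(\bar{u}_p)>0$ and $\delta^2_\psi I_d^{(p)}(\bar{\zeta}_p)<0$, together with the uniqueness of $\overrightarrow{\theta_p}$ established in the first two parts, that the critical point is the global extremum on each side; the chain (11) is then asserted rather than derived. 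Your minimax argument --- $\max_{\zeta_p}\Xi^{(p)}(u_p,\cdot)=I^{(p)}(u_p)$ by the Legendre pairing, $\min_{u_p}\Xi^{(p)}(\cdot,\zeta_p)=I^{(p)}_d[\zeta_p]$ by eliminating $u_p$, and the saddle identity at $(\bar{u}_p,\bar{\zeta}_p)$ --- buys a transparent derivation of (11) and makes the \emph{global} character of the extremality explicit, whereas a positive second variation at a critical point is by itself only a local statement and needs the convexity of $I^{(p)}$ (equivalently, positivity of (12) at every admissible point, which is what the paper implicitly uses) to be upgraded. Two points you should tighten. First, in the concentric case the constants $C_p,D_p$ are \emph{not} pinned down by the balance condition but by the two Dirichlet conditions at $r=R_2$ and $r=R_1$, via the strict monotonicity of the maps $M_p(t)$ and $N_p(t)$; the paper devotes most of its Second Part to exactly this, and it is also where the containment $C_p\in(0,\Gamma(n/2)/(2\pi^{n/2}R_2^n))$ you flag as a ``secondary subtlety'' is settled. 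Second, your identity $\min_{u_p\in\mathscr{A}}\Xi^{(p)}(u_p,\zeta_p)=I^{(p)}_d[\zeta_p]$ needs a remark on why the constraint $\|\nabla u_p\|_{L^\infty}\le 1$ is inactive at the inner minimizer; it suffices that it is inactive at the saddle point itself, where $|\nabla\bar{u}_p|=\xi_p^{1/2}\le 1$, which is all the minimax chain actually requires.
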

Lemma 3.2.7 shows that the maximization of the pure complementary
energy functional $I_d^{(p)}$ is perfectly dual to the minimization
of the potential energy functional $I^{(p)}$. Indeed, identity (11)
indicates there is no duality gap between them.
\begin{proof}
We divide our proof into three parts. In the first and second parts,
we discuss the uniqueness of $\theta_p$ for both cases. Global
extremum will be
studied in the third part. It is worth noticing that the first and second parts are similar to the proof of Theorem 1.2.\\
\\
{\it First Part: $\Omega=\mathbb{B}(O_1,R_1)$, $\Omega^*=\mathbb{B}(O_2,R_2)$, $\Omega\cap\Omega^*=\emptyset$}\\

{\bf (1) Discussion in $\Omega$}\\

Let $O_1=(a_1,a_2,\cdots,a_n)$. Actually, a radially symmetric
solution for the Euler-Lagrange equation (3) is of the form
\[
\overrightarrow{\theta_p}=\overline{F}_p(r)((x_1-a_1,\cdots,x_n-a_n))=\overline{F}_p\Big(\sqrt{\sum_{i=1}^n(x_i-a_i)^2}\Big)((x_1-a_1,\cdots,x_n-a_n)),
\]
where
\[
\overline{F}_p(r)=C_pR_1^n/r^n+\int_r^{R_1}f^+(\rho)\rho^{n-1}/r^n
d\rho
\]
is the unique solution of the differential equation
\[
\overline{F}_p'(r)+n\overline{F}_p(r)/r=-f^+(r)/r,\ \ \ r\in(0,R_1].
\]
Recall that $\bar{u}_p(R_1)=0$, as a result,
\[
\bar{u}_p(r)=\int_{R_1}^r\Big(R_1^nC_p+\int_{\rho}^{R_1}f^+(r)r^{n-1}
dr\Big)/\Big(\rho^{n-1}\lambda_p(\rho)\Big)d\rho, \ \ \ \
r\in(0,R_1].
\]
As a matter of fact, if $\bar{u}_p\in C[0,R_1]$, we have
\[
\displaystyle\lim_{\rho\to
0^+}\Big\{R_1^nC_p+\int_{\rho}^{R_1}f^+(r)r^{n-1} dr\Big\}=0,
\]
which indicates
\[
C_p=-\displaystyle\Gamma(n/2)/(2\pi^{n/2} R_1^n),
\]
from the normalized balance condition
$$\int_{\Omega}f^+(x)dx=2\pi^{n/2}/\Gamma(n/2)\int_0^{R_1}f^+(r)r^{n-1}dr=1.$$

{\bf (2) Discussion in $\Omega^*$}\\

Let $O_2=(b_1,b_2,\cdots,b_n)$. In fact, a radially symmetric
solution for the Euler-Lagrange equation (3) is of the form
\[
\overrightarrow{\theta_p}=\overline{G}_p(r)((x_1-b_1,\cdots,x_n-b_n))=\overline{G}_p\Big(\sqrt{\sum_{i=1}^n(x_i-b_i)^2}\Big)((x_1-b_1,\cdots,x_n-b_n)),
\]
where
\[
\overline{G}_p(r)=D_pR_2^n/r^n-\int_r^{R_2}f^-(\rho)\rho^{n-1}/r^n
d\rho
\]
is the unique solution of the differential equation
\[
\overline{G}_p'(r)+n\overline{G}_p(r)/r=f^-(r)/r,\ \ \ r\in(0,R_2].
\]
Recall that $\bar{u}_p(R_2)=0$, as a result,
\[
\bar{u}_p(r)=\int_{R_2}^r\Big(R_2^nD_p-\int_{\rho}^{R_2}f^-(r)r^{n-1}
dr\Big)/\Big(\rho^{n-1}\lambda_p(\rho)\Big)d\rho, \ \ \ \
r\in(0,R_2].
\]
Indeed, if $\bar{u}_p\in C[0,R_2]$, then by applying the similar
contradiction method as above, one has
\[
\displaystyle\lim_{\rho\to 0^+}\Big\{R_2^n
D_p-\int_{\rho}^{R_2}f^-(r)r^{n-1} dr\Big\}=0,
\]
which indicates
\[
D_p=\Gamma(n/2)/(2\pi^{n/2} R_2^n)
\]
from the normalized balance condition
\[
\int_{\Omega^*}f^-(x)dx=2\pi^{n/2}/\Gamma(n/2)\int_0^{R_2}f^-(r)r^{n-1}dr=1.
\]
\\
{\it Second Part: $\Omega=\mathbb{B}(O_1,R_1)$, $\Omega^*=\mathbb{B}(O_1,R_2)$, $R_1\neq R_2$}\\

{\bf (1) $R_1>R_2>0$}\\

Let $O_1=(a_1,a_2,\cdots,a_n)$. Actually, a radially symmetric
solution for the Euler-Lagrange equation (3) is of the form
\[
\overrightarrow{\theta_p}=F_p(r)((x_1-a_1,\cdots,x_n-a_n))=F_p\Big(\sqrt{\sum_{i=1}^n(x_i-a_i)^2}\Big)((x_1-a_1,\cdots,x_n-a_n)),
\]
where
\[
F_p(r)=C_pR_2^n/r^n-\int^r_{R_2}f^+(\rho)\rho^{n-1}/r^n d\rho
\]
is the unique solution of the differential equation
\[
F_p'(r)+nF_p(r)/r=-f^+(r)/r,\ \ \ r\in[R_2,R_1].
\]
Recall that $\bar{u}_p(R_2)=0$, consequently,
\[
\bar{u}_p(r)=\int_{R_2}^r\Big(R_2^nC_p-\int^{\rho}_{R_2}f^+(r)r^{n-1}
dr\Big)/\Big(\rho^{n-1}\lambda_p(\rho)\Big)d\rho, \ \ \ \
r\in[R_2,R_1].
\]
Let
\[
\tilde{F}(r):=1/R_2^n\int_{R_2}^rf^+(\rho)\rho^{n-1}d\rho, \ \ \ \
r\in[R_2,R_1].
\]
Since $f^+>0$, then $\tilde{F}\in C[R_2,R_1]$ is a strictly
increasing function with respect to $r\in[R_2,R_1]$ and consequently
is invertible. Let $\tilde{F}^{-1}$ be its inverse function, which
is also a strictly increasing function. From (9), we see that there
exists a unique piecewise continuous function $\lambda_p(x)\geq0$.
Since
\[
\displaystyle\lim_{r\to
\tilde{F}^{-1}(C_p)}(-\tilde{F}(r)+C_p)R_2^n/(r^{n-1}\lambda_p(r))=0,
\]
thus $\bar{u}_p$ is continuous at the point $r=\tilde{F}^{-1}(C_p)$.
As a result, $\bar{u}_p\in C[R_2,R_1]$. Notice that
$\bar{u}_p(R_1)=0$ and we can determine the constant $C_p$ uniquely.
Indeed, let
\[
\mu_p(\rho,t):=\Big(R_2^nt-\int^{\rho}_{R_2}f^+(r)r^{n-1}
dr\Big)/\Big(\rho^{n-1}\lambda_p(\rho,t)\Big)
\]
and
\[
M_{k}(t):=\int^{\tilde{F}^{-1}(t)}_{R_2}\mu_p(\rho,t)d\rho+\int^{R_1}_{\tilde{F}^{-1}(t)}\mu_p(\rho,t)d\rho,
\]
where $\lambda_p(\rho,t)$ is from (9). It is evident that
$\lambda_p$ depends on $C_p$. As a matter of fact, it is easy to
check $M$ is strictly increasing with respect to $t$, which leads to
$$C_p=M_p^{-1}(0).$$
Furthermore, by a similar discussion as in \cite{LU}, we have
$$\lim_{k\to\infty} C_p=\tilde{F}((R_1+R_2)/2).$$

{\bf (2) $0<R_1<R_2$}\\

In fact, a radially symmetric solution for the Euler-Lagrange
equation (3) is of the form
\[
\overrightarrow{\theta_p}=G_p(r)((x_1-a_1,\cdots,x_n-a_n))=G_p\Big(\sqrt{\sum_{i=1}^n(x_i-a_i)^2}\Big)((x_1-a_1,\cdots,x_n-a_n)),
\]
where
\[
G_p(r)=-D_pR_1^n/r^n+\int^r_{R_1}f^-(\rho)\rho^{n-1}/r^n d\rho
\]
is the unique solution of the differential equation
\[
G_p'(r)+nG_p(r)/r=f^-(r)/r,\ \ \ r\in[R_1,R_2].
\]
Recall that $\bar{u}_p(R_1)=0$, as a result,
\[
\bar{u}_p(r)=\int_{R_1}^r\Big(-R_1^nD_p+\int^{\rho}_{R_1}f^-(r)r^{n-1}
dr\Big)/\Big(\rho^{n-1}\lambda_p(\rho)\Big)d\rho, \ \ \ \
r\in[R_1,R_2].
\]
Let
\[
\tilde{G}(r):=1/R_1^n\int_{R_1}^rf^-(\rho)\rho^{n-1}d\rho, \ \ \ \
r\in[R_1,R_2].
\]
Since $f^->0$, then $\tilde{G}\in C[R_1,R_2]$ is a strictly
increasing function with respect to $r\in[R_1,R_2]$ and consequently
is invertible. Let $\tilde{G}^{-1}$ be its inverse function, which
is also a strictly increasing function. From (9), we see that there
exists a unique piecewise continuous function $\lambda_p(x)\geq0$.
Since
\[
\displaystyle\lim_{r\to
G^{-1}(D_p)}(G(r)-D_p)R_1^n/(r^{n-1}\lambda_p(r))=0,
\]
thus $\bar{u}_p$ is continuous at the point $r=G^{-1}(D_p)$. As a
result, $\bar{u}_p\in C[R_1,R_2]$. Notice that $\bar{u}_p(R_2)=0$
and we can determine the constant $D_p$ uniquely. Indeed, let
\[
\eta_p(\rho,t):=\Big(-R_1^nt+\int^{\rho}_{R_1}f^-(r)r^{n-1}
dr\Big)/\Big(\rho^{n-1}\lambda_p(\rho,t)\Big)
\]
and
\[
N_{p}(t):=\int^{\tilde{G}^{-1}(t)}_{R_1}\eta_p(\rho,t)d\rho+\int^{R_2}_{\tilde{G}^{-1}(t)}\eta_p(\rho,t)d\rho,
\]
where $\lambda_p(\rho,t)$ is from (9). It is evident that
$\lambda_p$ depends on $C_p$. As a matter of fact, it is easy to
check $N_p$ is strictly increasing with respect to $t$, which leads
to
\[
D_p=N_p^{-1}(0).
\]
Furthermore, by a similar discussion as in \cite{LU}, we have
\[
\lim_{k\to\infty} D_p=\tilde{G}((R_1+R_2)/2).
\]
\\
{\it Third Part:}\\

On the one hand, for any test function $\phi\in\mathscr{A}$
satisfying $\nabla\phi\neq0$ a.e. in $U$, the second variational
form $\delta_\phi^2I^{(p)}$ with respect to $\phi$ is equal to\beq
\int_U\Big\{|\nabla \bar{u}_p|^{p-2}|\nabla\phi|^2+(p-2)|\nabla
\bar{u}_p|^{p-4}(\nabla \bar{u}_p\cdot\nabla\phi)^2\Big\}dx.\eneq On
the other hand, for any test function $\psi\in\mathscr{W}$
satisfying $\psi\neq0$ a.e. in $U$, the second variational form
$\delta_\psi^2I_d^{(p)}$ with respect to $\psi$ is equal to
\beq-\int_U\psi^2\Big\{|\overrightarrow{\theta_p}|^2/(2\bar{\zeta}_p^3)+1/(p-2)2^{p/(p-2)}\zeta_p^{(4-p)/(p-2)}\Big\}dx.
\eneq From (12) and (13), one deduces immediately that
\[
\delta^2_\phi I^{(p)}(\bar{u}_p)>0,\ \
\delta_\psi^2J_d^{(p)}(\bar{\zeta}_p)<0.
\]
Together with the uniqueness of $\overrightarrow{\theta_p}$
discussed in the first and second parts, the proof is concluded.
\end{proof}
Consequently, we reach the conclusion of Theorem 1.1 by summarizing
the above discussion.
\subsection{Proof of Theorem 1.2:}
According to Rellich-Kondrachov Compactness Theorem, since
$$\displaystyle\sup_{k}|\bar{u}_k|\leq C(R_1,R_2)$$ and
$$\displaystyle\sup_{k}|\nabla\bar{u}_{k}|\leq 1,$$ then, there exists a
subsequence(without any confusion, we still denote as)
$\{\bar{u}_{k}\}_{k}$ and $u\in W_0^{1,\infty}(U)\cap
C(\overline{U})$ such that \beq\bar{u}_{k}\rightarrow u\
(k\to\infty)\ \text{in}\ L^\infty(U),\eneq \beq\nabla\bar{u}_{k}\
\overrightharpoon{*}\ \nabla u\ (k\to\infty)\ \text{weakly\ $\ast$\
in}\ L^\infty(U).\eneq It remains to check that $u$ satisfies (5).
%From (20), one knows \beq\bar{u}_{\varepsilon_k}\rightarrow
%u\ (k\to\infty)\ \text{a.e.\ in}\ \Omega.\eneq According to
%Lebesgue's dominated convergence theorem,
%\[\displaystyle\int_{\Omega}f(y)dy=\lim_{k\to\infty}\int_{\Omega}\bar{u}_{\varepsilon_k}(y)dy=1.\]
From (19), one has
\[\|\nabla u\|_{L^\infty(U)}\leq\displaystyle\liminf_{k\to\infty}\|\nabla\bar{u}_{k}\|_{L^\infty(U)}\leq\sup_{k\to\infty}\|\nabla\bar{u}_{k}\|_{L^\infty(U)}\leq 1.\]
Consequently, one reaches the conclusion of Theorem 1.2 by
summarizing the above discussion.
\begin{rem}
Frankly speaking, the uniqueness of the global minimizer for the
primal problem does not hold when $U$ is a general Lipschitz domain.
As $p\to\infty$, we have the infinity harmonic equation
$$\displaystyle\sum_{j,k=1}^{n}\frac{\partial u}{\partial x_j}\frac{\partial u}{\partial x_k}\frac{\partial^2 u}{\partial x_j\partial
x_k}=f.$$ This equation has often been used in image processing and
optimal Lipschitz extensions.
\end{rem}
{\bf Acknowledgment}: This project is partially supported by US Air
Force Office of Scientific Research (AFOSR FA9550-10-1-0487),
Natural Science Foundation of Jiangsu Province (BK 20130598),
National Natural Science Foundation of China (NSFC 71673043,
71273048, 71473036, 11471072), the Scientific Research Foundation
for the Returned Overseas Chinese Scholars, Fundamental Research
Funds for the Central Universities on the Field Research of
Commercialization of Marriage between China and Vietnam (No.
2014B15214). This work is also supported by Open Research Fund
Program of Jiangsu Key Laboratory of Engineering Mechanics,
Southeast University (LEM16B06).


\begin{thebibliography}{99}
\bibitem{LA1}L. Ambrosio, Optimal transport maps in Monge-Kantorovich
problem, ICM, 3(2002), 1-3.
\bibitem{LA2}L. Ambrosio, Lecture Notes on Optimal Transfer Problems, preprint.
\bibitem{JH}J. Bourgain, H. Brezis, Sur l'\'{e}quation
div $u=f$, C. R. Acad. Sci. Paris, Ser. I334(2002), 973-976.
\bibitem{Ca1} L. A. Caffarelli, M. Feldman and R. J. MCcann,
Constructing optimal maps for Monge's transport problem as a limit
of strictly convex costs, Journal of AMS, 15(2001), 1-26.
\bibitem{Evans1} L. C. Evans, Partial differential equations and
Monge-Kantorovich mass transfer(survey paper).
\bibitem{Evans2} L. C. Evans and W. Gangbo, Differential equations
methods for the Monge-Kantorovich mass transfer problem, Mem. Amer.
Math. Soc. 653(1999).
\bibitem{Evans3} L. C. Evans, Three singular variational problems,
preprint, 2002.
\bibitem{Evans4} L. C. Evans, Partial Differential Equations,
Graduate Studies in Mathematics, Vol. 19, 2002.
\bibitem{G1}
D. Y. Gao, G. Strang, Geometric nonlinearity: Potential energy,
complementary energy, and the gap function, Quart. Appl. Math.
47(3)(1989), 487-504.
\bibitem{G6}
D. Y. Gao, R. W. Ogden, Multiple solutions to non-convex variational
problems with implications for phase transitions and numerical
computation. Q. Jl Mech. Appl. Math. {\bf 61(4)}, 497-522(2008)
\bibitem{G7} D. Y. Gao and
X. Lu, Z. Angew. Math. Phys. (2016), DOI 10.1007/s00033-016-0636-0.
\bibitem{GC2} W. Gangbo and R. J. McCann, The geometry of optimal transportation,
Acta Math., 177(1996), 113-161.
\bibitem{K1} L. V. Kantorovich, On the transfer of masses, Dokl. Akad. Nauk.
SSSR 37(1942), 227-229 (Russian).
\bibitem{K2} L. V. Kantorovich, On
a problem of Monge, Uspekhi Mat. Nauk. 3(1948), 225-226.
\bibitem{LIONS} J. L. Lions and E. Magenes, Probl\`{e}mes aux
limites non homog\`{e}nes et applications I-III, Dunod, Paris,
1968-1970.
\bibitem{LU} X. Lu and D. Y. Gao, Analytic solution for the 1-D Kantorovich mass transfer problem, preprint, 2015.
\bibitem{LU3} Y. Wu and X. Lu, An approximation method for the optimization of $p$-th moment of $\mathbb{R}^n$-valued random
variable. preprint, 2015.
\bibitem{Monge} G. Monge, M\'{e}moire sur la th\'{e}orie des d\'{e}blais et de remblais, Histoire de l'Acad\'{e}mie Royale des Sciences de Paris,
avec les M\'{e}moire de Math\'{e}matique et de Physique pour la
m\^{e}me ann\'{e}e, (1781), 666-704.
\bibitem{R} S. T. Rachev: The Monge-Kantorovich mass transferance problem and
its stochastic applications. Theory of Prob. and Appl., {\bf 29},
647-676(1984)
\bibitem{Su} V. N. Sudakov, Geometric problems in the theory of
infinite-dimensional probability distributions, Proceedings of
Steklov Institute 141(1979), 1-178.
\bibitem{V} A. M. Vershik, Some remarks on the infinite-dimensional problems of linear programming, Russian Math. Survey, 25(1970),
117-124.
\end{thebibliography}
\end{document}